\documentclass[]{amsart}

\usepackage[english]{babel}
\usepackage[utf8]{inputenc}
\usepackage{paralist}
\usepackage{amsmath, amsfonts, amssymb, amsthm}
\usepackage{color}
\usepackage[normalem]{ulem}
\usepackage{tikz}

\newcommand{\CC}{\mathbb{C}}

\newcommand{\NN}{\mathbb{N}}

\newcommand{\ZZ}{\mathbb{Z}}

\newcommand{\Hc}{\mathcal{H}}

\newcommand{\Oc}{\mathcal{O}}

\newcommand{\set}[1]{\left\{ #1 \right\}}
\newcommand{\setb}[1]{\left( #1 \right)}
\newcommand{\abs}[1]{\left| #1 \right|}

\newcommand{\gfr}{\mathfrak{g}}
\newcommand{\bino}[2]{\begin{pmatrix} #1 \\ #2 \end{pmatrix}}

\newtheorem{mymasterthm}{notForUse}
\theoremstyle{definition}

\theoremstyle{plain}
\newtheorem{mylemma}[mymasterthm]{Lemma}
\newtheorem{mythm}[mymasterthm]{Theorem}
\newtheorem{mycorol}[mymasterthm]{Corollary}

\allowdisplaybreaks

\title[A Function Field Variant of Pillai's Problem]{A Function Field Variant of Pillai's Problem}
\subjclass[2010]{11B37, 11D61}
\keywords{Diophantine equations, linear recurrences, Pillai's problem}

\author[C. Fuchs]{Clemens Fuchs}
\author[S. Heintze]{Sebastian Heintze}
\thanks{Supported by Austrian Science Fund (FWF): I4406.}

\address{University of Salzburg\newline
	\indent Department of Mathematics\newline
	\indent Hellbrunnerstr. 34 \newline
	\indent A-5020 Salzburg, Austria}
\email{clemens.fuchs@sbg.ac.at, sebastian.heintze@sbg.ac.at}

\begin{document}
	
	\maketitle
	
	
	\begin{abstract}
		In this paper, we consider a variant of Pillai's problem over function fields $ F $ in one variable over $ \CC $. For given simple linear recurrence sequences $ G_n $ and $ H_m $, defined over $ F $ and satisfying some weak conditions, we will prove that the equation $ G_n - H_m = f $ has only finitely many solutions $ (n,m) \in \NN^2 $ for any non-zero $ f \in F $, which can be effectively bounded. Furthermore, we prove that under suitable assumptions there are only finitely many effectively computable $ f $ with more than one representation of the form $ G_n - H_m $.
	\end{abstract}
	
	\section{Introduction}
	
	An about a hundred year old problem going back to Pillai \cite{pillai-1931} considers exponential Diophantine equations of the form
	\begin{equation}
		\label{p8-eq:pillai}
		a^n - b^m = f
	\end{equation}
	for given positive integers $ a,b,f $ to be solved in positive integers $ n,m \geq 2 $.
	If $ a $ and $ b $ are given, then topics of interest are to answer for which $ f $ equation \eqref{p8-eq:pillai} has infinitely many solutions, finitely many solutions or at most one solution in $ (n,m) \in \NN^2 $, respectively, where $ \NN $ denotes the set of natural numbers i.e. positive integers.
	Furthermore, we can ask for a bound on the number or size of solutions $ (n,m) $ if there are only finitely many of them.
	In \cite{pillai-1936} Pillai proved, extending work of Herschfeld \cite{herschfeld-1936} for the case $ a=3, b=2 $, that if $ a $ and $ b $ are coprime, $ a > b \geq 1 $, and $ \abs{f} $ is sufficiently large, then equation \eqref{p8-eq:pillai} has at most one solution.
	He also claimed that \eqref{p8-eq:pillai} can have at most one solution if $ a $ and $ b $ are not coprime, but this is incorrect as was shown by the example $ 6^4 - 3^4 = 1215 = 6^5 - 3^8 $ in \cite{bugeaud-luca-2006}.
	The finiteness of the number of solutions of \eqref{p8-eq:pillai} was already mentioned by P\'olya in \cite{polya-1918}, where instead of Siegel's theorem on integral points on curves the approximation theorem of Thue is used in the proof.
	Bennett proved in \cite{bennett-2001} that for any triple $ (a,b,f) $ of nonzero integers, with $ a,b \geq 2 $, equation \eqref{p8-eq:pillai} has at most two solutions $ (n,m) \in \NN^2 $.
	
	If one allows $ a $ and $ b $ to vary, Pillai conjectured in \cite{pillai-1936} that there are only finitely many solutions with $ n \geq 2, m \geq 2 $. Here for $ f=1 $ we get the famous Catalan conjecture, completely proved by Mih\u ailescu \cite{mihailescu-2004}.
	Other results with varying $ a $ are listed in the paper \cite{bugeaud-luca-2006} already mentioned above. In \cite{luca-2003} Luca used the $ abc $-conjecture to prove that the equation $ p^{n_1} - p^{n_2} = q^{m_1} - q^{m_2} $ has only finitely many solutions $ (p,q,n_1,n_2,m_1,m_2) $ in positive integers, with $ p \neq q $ primes and $ n_1 \neq n_2 $ (see also \cite{waldschmidt-2010} for a quantitative version of Pillai's conjecture that follows from the $ abc $-conjecture).
	For a rather complete historical summary before 2009 on Pillai's problem we refer to \cite{waldschmidt-2010}.
	
	A natural generalisation of this problem is to replace $ a^n $ and $ b^m $ by simple linear recurrence sequences $ A_n = a_1 \alpha_1^n + \cdots + a_d \alpha_d^n $ and $ B_m = b_1 \beta_1^m + \cdots + b_t \beta_t^m $ of integers.
	Pillai's equation is obtained when $ A_n = a^n $ and $ B_m = b^m $.
	Since Waldschmidt's survey \cite{waldschmidt-2010}, a significant number of papers considered this generalization for special recurrences (e.g. $ k $-generalized Fibonacci numbers and powers of $ 2 $ and $ 3 $; just in order to give a concrete reference we mention \cite{ddamulira-gomez-luca-2018}). The authors aimed for and proved complete results in the sense that all exceptions, in which more than one solution exists, were explicitly determined.
	Such a result was proved by Stroeker and Tijdeman (cf. \cite{stroeker-tijdeman-1982}) for the case $ a=3, b=2 $ proving another conjecture by Pillai.
	In \cite{chim-pink-ziegler-2018} Chim, Pink and Ziegler proved that if $ A_n $ and $ B_m $ are strictly increasing in absolute values and have dominant roots $ \alpha $ and $ \beta $, respectively, which are multiplicatively independent, then there exists an effectively computable finite set $ E $ such that $ A_n - B_m = f $ has more than one solution $ (n,m) \in \NN^2 $ if and only if $ f \in E $.
	The independence of the dominant roots is a natural condition since otherwise one can find counterexamples as given in \cite{chim-pink-ziegler-2018}.
	
	In the present paper we consider a function field analogue of the Pillai problem.
	Silverman worked in \cite{silverman-1982} with the Cassels-Catalan equation $ ax^m + by^n = c $ for fixed $ a,b,c $ over function fields.
	We are interested in solutions $ (n,m) \in \NN^2 $ of
	\begin{equation}
		\label{p8-eq:funcfieldeq}
		G_n - H_m = f
	\end{equation}
	where $ G_n $ and $ H_m $ are simple linear recurrence sequences defined over a function field $ F $ in one variable over $ \CC $ and $ f \in F^* $.
	We will prove that under weak assumptions there are only finitely many solutions $ (n,m) \in \NN^2 $ of \eqref{p8-eq:funcfieldeq} for any non-zero $ f \in F $ and that $ n,m $ are bounded by an effectively computable constant, depending only on $ f $, the genus $ \gfr $ of $ F $, and the characteristic roots and coefficients of $ G_n $ and $ H_m $. Moreover, we will show that under some suitable conditions there are only finitely many $ f \in F $, which can be effectively computed, with two distinct representations of the form \eqref{p8-eq:funcfieldeq}.
	
	\section{Notations and Results}
	
	Throughout the paper we denote by $ F $ a function field in one variable over $ \CC $ and by $ \gfr $ the genus of $ F $.
	For the convenience of the reader we give a short wrap-up of the notion of valuations that can e.g. also be found in \cite{fuchs-heintze-p3}:
	For $ c \in \CC $ and $ f(x) \in \CC(x) $, where $ \CC(x) $ is the rational function field over $ \CC $, we denote by $ \nu_c(f) $ the unique integer such that $ f(x) = (x-c)^{\nu_c(f)} p(x) / q(x) $ with $ p(x),q(x) \in \CC[x] $ such that $ p(c)q(c) \neq 0 $. Further we write $ \nu_{\infty}(f) = \deg q - \deg p $ if $ f(x) = p(x) / q(x) $.
	These functions $ \nu : \CC(x) \rightarrow \ZZ $ are up to equivalence all valuations in $ \CC(x) $.
	If $ \nu_c(f) > 0 $, then $ c $ is called a zero of $ f $, and if $ \nu_c(f) < 0 $, then $ c $ is called a pole of $ f $, where $ c \in \CC \cup \set{\infty} $.
	For a finite extension $ F $ of $ \CC(x) $ each valuation in $ \CC(x) $ can be extended to no more than $ [F : \CC(x)] $ valuations in $ F $. This again gives up to equivalence all valuations in $ F $.
	Both, in $ \CC(x) $ as well as in $ F $ the sum-formula
	\begin{equation*}
		\sum_{\nu} \nu(f) = 0
	\end{equation*}
	holds, where the sum is taken over all valuations in the considered function field.
	Moreover, valuations have the properties $ \nu(fg) = \nu(f) + \nu(g) $ and $ \nu(f+g) \geq \min \set{\nu(f), \nu(g)} $ for all $ f,g \in F $.
	
	Our first result is the following theorem which states that any fixed non-zero element $ f \in F $ has only finitely many representations of the form $ G_n - H_m $:
	
	\begin{mythm}
		\label{p8-thm:diffequalc}
		Let $ G_n = a_1 \alpha_1^n + \cdots + a_d \alpha_d^n $ and $ H_m = b_1 \beta_1^m + \cdots + b_t \beta_t^m $ be two simple linear recurrence sequences such that $ a_i, \alpha_i, b_j, \beta_j \in F^* $ for $ i=1,\ldots,d $ and $ j=1,\ldots,t $.
		Assume that no $ \alpha_i $ or $ \beta_j $ and no ratio $ \alpha_i/\alpha_j $ or $ \beta_i/\beta_j $ for $ i \neq j $ lies in $ \CC $.
		Moreover, let $ f \in F^* $ be a given non-zero element.
		Then there exists an effectively computable constant $ C $, which depends only on the $ a_i, \alpha_i, b_j, \beta_j, f $ and $ \gfr $, such that for all $ (n,m) \in \NN^2 $ with $ G_n - H_m = f $ we have
		\begin{equation*}
			\max \setb{n,m} \leq C.
		\end{equation*}
	\end{mythm}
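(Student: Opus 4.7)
The plan is to interpret the equation as the vanishing of a sum of $S$-units: setting
\[ \sum_{i=1}^d a_i\alpha_i^n + \sum_{j=1}^t (-b_j)\beta_j^m + (-f) = 0, \]
I would take $S$ to be the finite set of valuations of $F$ at which any of the fixed elements $a_i,\alpha_i,b_j,\beta_j,f$ has a zero or a pole. Every summand is then an $S$-unit of $F$, and $|S|$ together with $\gfr$ are absolute data of the problem. The driving tool is the Brownawell--Masser theorem, the function-field analogue of the $S$-unit equation theorem, which yields an effective bound on the height of every term of a non-degenerate vanishing sum of $S$-units in terms of the number of terms, $|S|$, and $\gfr$.

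Assume first that no proper subsum of the $d+t+1$ terms above vanishes. Dividing by $-f$ and invoking Brownawell--Masser gives an effective upper bound on $h(a_i\alpha_i^n/f)$ for each $i$, where $h$ denotes the (projective) height on $F$. Since $\alpha_i\notin\CC$ one has $h(\alpha_i)>0$, and the elementary inequality $h(a_i\alpha_i^n/f)\geq n\cdot h(\alpha_i)-h(a_i)-h(f)$ turns this into an effective bound on $n$. The same argument applied to $b_j\beta_j^m/f$ bounds $m$.

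The main difficulty is the case that some proper subsum does vanish; I would argue by induction on the total number of terms. A minimal vanishing subsum is of one of three kinds: supported only among the $a_i\alpha_i^n$-terms, supported only among the $b_j\beta_j^m$-terms, or of mixed type (possibly including $-f$). Both the subsum and its complement are vanishing sums of $S$-units strictly shorter than the original, so the inductive hypothesis applies to each. The terminal case is a two-term relation, e.g.\ $a_i\alpha_i^n+a_k\alpha_k^n=0$ with $i\neq k$: this forces $(\alpha_i/\alpha_k)^n=-a_k/a_i$, and since $\alpha_i/\alpha_k\notin\CC$ gives $h(\alpha_i/\alpha_k)>0$, one reads off $n\cdot h(\alpha_i/\alpha_k)=h(a_k/a_i)$, bounding $n$ effectively. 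Analogously for $\beta$-only two-term vanishings, while mixed two-term identities such as $a_i\alpha_i^n=b_j\beta_j^m$ or $a_i\alpha_i^n=f$ are handled by directly comparing valuations and again exploiting $\alpha_i,\beta_j\notin\CC$.

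The main obstacle is the careful bookkeeping of this case analysis: one must verify that the hypothesis that no ratio is constant passes to each subsum, so that Brownawell--Masser remains applicable at every step, and that the finitely many constants accumulated along the induction branches can be merged into a single effective $C$ depending only on $a_i,\alpha_i,b_j,\beta_j,f$ and $\gfr$. Because $d$ and $t$ are fixed and the number of possible patterns of vanishing subsums is bounded, the induction terminates after finitely many steps and delivers a uniform effective bound on $\max(n,m)$ as claimed.
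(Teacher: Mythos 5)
Your overall strategy --- rewriting the equation as a vanishing sum of $S$-units, applying Brownawell--Masser to minimal vanishing subsums, and converting the resulting height bounds into bounds on $n$ and $m$ via $\Hc(\alpha_i)>0$ and $\Hc(\alpha_i/\alpha_k)>0$ --- is the same as the paper's, and your non-degenerate case and the pure $\alpha$-only or $\beta$-only two-term subsums are handled correctly. The gap is in the degenerate case. Your induction is not well-founded: the inductive hypothesis is never stated, and it cannot be ``both exponents are effectively bounded'', because a vanishing subsum that does not contain the term $-f$ need not bound anything. Concretely, your terminal mixed case $a_i\alpha_i^n=b_j\beta_j^m$ is \emph{not} handled by comparing valuations: Theorem \ref{p8-thm:diffequalc} imposes no multiplicative independence between the $\alpha_i$ and the $\beta_j$, so one may have $\alpha_i=\beta_j$ and $a_i=b_j$, in which case this identity holds for every pair with $n=m$; comparing valuations only yields the linear relations $n\nu(\alpha_i)-m\nu(\beta_j)=\nu(b_j/a_i)$, which bound neither exponent. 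Similarly, the complement of the subsum containing $-f$ is an identity of the type $G'_n=H'_m$ between sub-recurrences, which in general admits infinitely many solutions --- precisely the situation of Corollary 4 of Fuchs--Peth\H{o} mentioned after the theorem, where no bound exists unless extra conditions hold.

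The repair is to use the term $f$ and to process the subsums in a specific order, which is what the paper does. The minimal vanishing subsum containing $-f$ must contain some term of $G$ or of $H$; Brownawell--Masser applied to it (after dividing by $f$) bounds that term's height relative to $f$ and hence bounds one exponent, say $n$. If that subsum contains terms of both recurrences, both exponents are bounded and you are done. Otherwise, look at the minimal vanishing subsum containing $b_1\beta_1^m$: either it contains a second $\beta$-term, so that $m\cdot\Hc(\beta_1/\beta_{j_0})$ is bounded and $\beta_1/\beta_{j_0}\notin\CC$ finishes, or its partner is an $\alpha$-term (or $-f$), and then one must feed in the \emph{already established} bound on $n$ to bound $\Hc(a_{i_1}\alpha_{i_1}^n)$, whence $m\cdot\Hc(\beta_1)$ is bounded and $\beta_1\notin\CC$ finishes. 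It is this interleaving --- first bounding one exponent from the subsum containing $f$, then using that bound when analysing the mixed subsums --- that your subsum-by-subsum induction misses.
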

	
	In Corollary 4 in \cite{fuchs-petho-2005} the case of $ G_n - H_m = 0 $ is completely solved. It is proven that there is an effectively computable upper bound for $ \max \setb{n,m} $ unless $ G_n $ and $ H_m $ differ not significantly which is described precisely.
	
	In the special case that $ G_n $ and $ H_m $ are pure powers of non-constant polynomials in $ \CC[x] $ we get:
	
	\begin{mycorol}
		\label{p8-cor:specialcaseconst}
		Let $ p,q,f $ be polynomials in $ \CC[x] $ with $ p,q $ non-constant and $ f $ non-zero. Then for all $ (n,m) \in \NN^2 $ with $ p^n - q^m = f $ we have $ \max \setb{n,m} \leq C $ for
		\begin{equation*}
			C = \frac{1 + \deg p + \deg q + 2\deg f}{\min \setb{\deg p, \deg q}}.
		\end{equation*}
	\end{mycorol}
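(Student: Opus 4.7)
The plan is to reduce the statement to a clean application of the classical Mason--Stothers (polynomial $abc$) theorem, which yields the explicit constant $C$. Qualitatively the finiteness already follows from Theorem~\ref{p8-thm:diffequalc} (we are in the rational case $F=\CC(x)$, $\gfr=0$, and the hypothesis on the characteristic roots becomes precisely that $p$ and $q$ be non-constant), but that theorem's bound is not explicit, so a direct argument is needed here.

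First I would coprimalise. Let $g = \gcd(p^n,q^m)\in\CC[x]$ and observe that $g\mid f$, so $\deg g\le\deg f$. Setting
\[
P = p^n/g,\qquad Q = q^m/g,\qquad F = f/g \;\in\;\CC[x],
\]
the equation becomes $P-Q=F$ with $\gcd(P,Q)=1$ by construction, and pairwise coprimality with $F$ is then automatic because any common factor of, say, $P$ and $F$ must also divide $P-F=Q$.

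Next I would apply Mason--Stothers, which (provided $P,Q,F$ are not all constant) gives $\max(\deg P,\deg Q,\deg F)\le n_0(PQF)-1$, where $n_0$ counts distinct roots in $\CC$. Every root of $P$ is a root of $p$, hence $n_0(P)\le\deg p$; likewise $n_0(Q)\le\deg q$ and $n_0(F)\le\deg F\le\deg f$. Pairwise coprimality then yields $n_0(PQF)\le\deg p+\deg q+\deg f$. Combining this with $\deg P=n\deg p-\deg g\ge n\deg p-\deg f$, and the analogous inequality for $Q$, gives
\[
n\deg p,\; m\deg q \;\le\; \deg p+\deg q+2\deg f-1,
\]
and dividing by $\min(\deg p,\deg q)$ produces the claimed bound $C$.

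The only remaining case is the degenerate one where $P,Q,F$ are all constants: then $p^n$, $q^m$ and $f$ are all scalar multiples of $g$, so $p^n=\lambda q^m$ for some $\lambda\in\CC^*$ (with $\lambda\ne1$ since $f\ne0$), forcing $n\deg p=m\deg q=\deg f$, which is comfortably inside $C$. There is no deep obstacle in this proof; the only mild subtlety is that $p$ and $q$ need not be coprime as polynomials, which is why one must divide out $g$ before invoking Mason--Stothers, and why the final bound carries $2\deg f$ rather than $\deg f$.
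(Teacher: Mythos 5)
Your proof is correct, but it takes a different route from the paper. The paper stays inside its own machinery: it rewrites the equation as the unit equation $1 + \tfrac{1}{f}q^m - \tfrac{1}{f}p^n = 0$, notes that with three non-zero terms no proper subsum can vanish, takes $S$ to consist of $\nu_\infty$ and the zeros of $p,q,f$ (so $\abs{S} \le 1 + \deg p + \deg q + \deg f$), and applies the Brownawell--Masser bound with $k=2$ and genus $0$ to get $\Hc(p^n/f), \Hc(q^m/f) \le \abs{S}$, from which the constant drops out via height calculus. You instead invoke Mason--Stothers, which is essentially the genus-zero, three-term special case of Brownawell--Masser, but stated for coprime polynomials rather than $S$-units; this forces you to first divide out $g = \gcd(p^n, q^m)$ (correctly observing $g \mid f$, which is where the extra $\deg f$ enters) and to treat separately the degenerate case where $P,Q,F$ are all constant, which you do correctly ($n\deg p = m\deg q = \deg f$ is well within $C$). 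The paper's $S$-unit formulation sidesteps both the coprimalization and the degenerate case, because common zeros and the point at infinity are simply absorbed into $S$ and everything is measured by heights; this is also why the same argument scales to the general recurrences of Theorem \ref{p8-thm:diffequalc}. What your version buys is a marginally sharper constant (numerator $\deg p + \deg q + 2\deg f - 1$ instead of $1 + \deg p + \deg q + 2\deg f$) and independence from the Brownawell--Masser theorem, at the cost of an extra case distinction and reliance on an external classical result not otherwise used in the paper.
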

	
	For our next theorem we will need some further notation.
	In the case that $ G_n = a_1 \alpha_1^n + \cdots + a_d \alpha_d^n $ is a polynomial power sum, i.e. $ \alpha_1, \ldots, \alpha_d \in \CC[x] $, $ \alpha_1 $ is called the dominant root if either $ d=1 $ or $ \deg \alpha_1 > \max_{i=2,\ldots,d} \deg \alpha_i $.
	In the second case this can be rewritten as $ \nu_{\infty}(\alpha_1) < \min_{i=2,\ldots,d} \nu_{\infty}(\alpha_i) $ and since the $ \alpha_i $ are polynomials the inequality $ \nu_{\infty}(\alpha_1) < 0 $ is also satisfied.
	
	Therefore, for more general $ \alpha_1, \ldots, \alpha_d \in F $ and a valuation $ \nu $ in $ F $, we call $ \alpha_1 $ the \emph{$ \nu $-dominant root} if either $ \nu(\alpha_1) < \min \setb{\nu(\alpha_2), \ldots, \nu(\alpha_d), 0} $ or $ d=1 $.
	
	We will now consider elements $ f \in F $ with more than one representation of the form $ G_n - H_m $. Our goal is to prove that, under some not too restrictive conditions, more than one representation is only possible for finitely many $ f $.
	Hence it is obvious that we must exclude situations where $ G_{n_1} = G_{n_2} $ for arbitrary large indices $ n_1, n_2 $.
	Thus we have to assume that there is a bound $ N_0 $ such that for $ n_1, n_2 > N_0 $ we have $ G_{n_1} \neq G_{n_2} $. By throwing away the first $ N_0 $ elements of the recurrence sequence and considering only the remaining ones, we may assume that the map $ n \mapsto G_n $ is injective. We will write \emph{$ G_n $ has no multiple values} for this assumption.
	
	Furthermore, if $ \alpha_1 $ is the $ \nu $-dominant root of $ G_n $, there is an effectively computable bound $ N_1 $ such that for $ n > N_1 $ we have $ \nu(a_1\alpha_1^n) < \min_{i=2,\ldots,d} \nu(a_i\alpha_i^n) $. By the same argument as above we may assume that this inequality holds for all $ n \in \NN $.
	We will refer to this by saying \emph{the $ \nu $-dominant root has immediate effect}.
	
	Last but not least we call two elements $ \alpha, \beta \in F $ \emph{multiplicatively independent} if $ \alpha^r \beta^s \in \CC $ for $ r,s \in \ZZ $ implies that $ r = s = 0 $.
	
	The result is now the following statement which implies that under the given conditions there are only finitely many $ f $ with at least two representations of the form $ G_n - H_m $:
	
	\begin{mythm}
		\label{p8-thm:diffequaldiffdom}
		Let $ G_n = a_1 \alpha_1^n + \cdots + a_d \alpha_d^n $ and $ H_m = b_1 \beta_1^m + \cdots + b_t \beta_t^m $ be two simple linear recurrence sequences such that $ a_i, \alpha_i, b_j, \beta_j \in F^* $ for $ i=1,\ldots,d $ and $ j=1,\ldots,t $.
		Assume that there exists a valuation $ \nu $ in $ F $ such that $ \alpha_1 $ and $ \beta_1 $ are the $ \nu $-dominant roots with immediate effect of $ G_n $ and $ H_m $, respectively, that $ \alpha_1, \beta_1 \notin \CC $, and that $ \alpha_1 $ and $ \beta_1 $ are multiplicatively independent.
		Then there exists an effectively computable constant $ C $, which depends only on the $ a_i, \alpha_i, b_j, \beta_j $ and $ \gfr $, such that for all distinct $ (n_1,m_1), (n_2,m_2) \in \NN^2 $ with $ G_{n_1} - H_{m_1} = G_{n_2} - H_{m_2} $ we have
		\begin{equation*}
			\max \setb{n_1,m_1,n_2,m_2} \leq C.
		\end{equation*}
	\end{mythm}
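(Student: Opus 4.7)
The plan is to rewrite the collision $G_{n_1}-H_{m_1}=G_{n_2}-H_{m_2}$ as the vanishing $S$-unit relation
\begin{equation*}
\sum_{i=1}^{d}a_i\alpha_i^{n_1}-\sum_{i=1}^{d}a_i\alpha_i^{n_2}-\sum_{j=1}^{t}b_j\beta_j^{m_1}+\sum_{j=1}^{t}b_j\beta_j^{m_2}=0,
\end{equation*}
where $S$ is the finite set of valuations of $F$ containing every zero and pole of the $a_i,\alpha_i,b_j,\beta_j$. I first dispose of the degenerate cases. If $n_1=n_2$ then $m_1\neq m_2$ and the relation reduces to $H_{m_1}=H_{m_2}$; assuming WLOG $m_1>m_2$ and using that $\beta_1$ is $\nu$-dominant with immediate effect together with $\nu(\beta_1)<0$, the term $b_1\beta_1^{m_1}$ has strictly smaller $\nu$-value than every other summand, contradicting the ultrametric for a vanishing sum. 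The case $m_1=m_2$ is symmetric, so we may assume $n_1>n_2$ and, after relabelling, $m_1>m_2$.

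Comparing $\nu$-values of all $2(d+t)$ summands, every term other than $a_1\alpha_1^{n_1}$ and $b_1\beta_1^{m_1}$ has $\nu$-value strictly greater than $\min\setb{\nu(a_1\alpha_1^{n_1}),\nu(b_1\beta_1^{m_1})}$: the non-dominant terms by immediate effect, and $a_1\alpha_1^{n_2}$, $b_1\beta_1^{m_2}$ because $n_1>n_2$, $m_1>m_2$, and $\nu(\alpha_1),\nu(\beta_1)<0$. Since the sum vanishes, the global $\nu$-minimum must be attained at least twice, forcing the key equality
\begin{equation*}
n_1\nu(\alpha_1)+\nu(a_1)=m_1\nu(\beta_1)+\nu(b_1).
\end{equation*}

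I then invoke a function field $S$-unit equation theorem of Brownawell--Masser type: in any vanishing sum $\sum u_i=0$ of $S$-units in $F$ with no vanishing proper subsum, the heights $h(u_i/u_j)$ are effectively bounded in terms of the number of summands, $\abs{S}$ and $\gfr$. Let $T$ be a minimal vanishing subsum containing $a_1\alpha_1^{n_1}$. Within $T$ the $\nu$-minimum must again be attained at least twice, and by the preceding analysis the only candidate with sufficiently small $\nu$-value is $b_1\beta_1^{m_1}$, so $b_1\beta_1^{m_1}\in T$. Brownawell--Masser applied to $T$ yields an effective upper bound on $h(\alpha_1^{n_1}\beta_1^{-m_1})$. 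Because $\alpha_1,\beta_1\notin\CC$ are multiplicatively independent, the divisors $\mathrm{div}(\alpha_1)$ and $\mathrm{div}(\beta_1)$ are $\ZZ$-linearly independent, so $h(\alpha_1^{n_1}\beta_1^{-m_1})$ grows effectively linearly in $\max\setb{n_1,m_1}$. This bounds $n_1$ and $m_1$, and since $n_2<n_1$ and $m_2<m_1$ the bounds on $n_2,m_2$ follow automatically.

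The main obstacle is the effective function field $S$-unit machinery and the bookkeeping it entails; once the $\nu$-dominance analysis singles out $(a_1\alpha_1^{n_1},b_1\beta_1^{m_1})$ as the only pair capable of carrying the global $\nu$-minimum, the remainder is a clean application of Brownawell--Masser together with the height lower bound for power products of multiplicatively independent elements. Ensuring that every constant remains effective and depends only on the data $a_i,\alpha_i,b_j,\beta_j$ and $\gfr$ is the technical heart of the argument.
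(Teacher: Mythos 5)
Your argument for the main case $d>1$ and $t>1$ is essentially the paper's proof: $\nu$-dominance with immediate effect forces the two dominant terms to lie in the same minimal vanishing subsum, Brownawell--Masser bounds the height of their ratio, and multiplicative independence converts that height bound into a bound on the exponents (this last conversion is exactly the paper's Lemma~\ref{p8-lemma:quotofindep}; you assert it via linear independence of the divisors, and the $2\times 2$-determinant argument you allude to is indeed how the paper proves it, so this is acceptable). One correction even here: you cannot ``relabel'' so that simultaneously $n_1>n_2$ and $m_1>m_2$, since swapping the two pairs swaps both coordinates at once. This slip is harmless, because the identical argument works with $N=\max\setb{n_1,n_2}$ and $M=\max\setb{m_1,m_2}$ in place of $n_1$ and $m_1$, and bounding the maxima bounds all four exponents; this is how the paper phrases it.

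The genuine gap is that your valuation analysis silently assumes $\nu(\alpha_1)<0$ and $\nu(\beta_1)<0$. By the paper's definition, $\nu$-dominance imposes $\nu(\alpha_1)<\min\setb{\nu(\alpha_2),\ldots,\nu(\alpha_d),0}$ only when $d>1$; for $d=1$ the condition is vacuous and $\nu(\alpha_1)$ may well be $\geq 0$ (likewise for $t=1$ and $\beta_1$). In those cases your claims that $a_1\alpha_1^{n_2}$ has strictly larger $\nu$-value than $a_1\alpha_1^{n_1}$, that only the two dominant terms can carry the global $\nu$-minimum, and even your disposal of the degenerate case $H_{m_1}=H_{m_2}$ when $t=1$ (there one must instead argue that $\beta_1^{m_1-m_2}=1$ would force $\beta_1\in\CC$) all break down. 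The paper therefore treats $d=t=1$, $d=1<t$ and $d>1=t$ by separate arguments: for $d=t=1$ one either applies Brownawell--Masser to the full four-term relation or splits it into two two-term vanishing subsums, using $\alpha_1\notin\CC$ to rule out $a_1\alpha_1^{n_1}-a_1\alpha_1^{n_2}=0$; in the mixed cases one first bounds $M$ (resp.\ $N$) together with one exponent $k_1\in\set{n_1,n_2}$ via the subsum containing the dominant term, and then needs a \emph{second} application of Brownawell--Masser to the minimal vanishing subsum containing the remaining term $a_1\alpha_1^{k_2}$, whose partner $\omega$ has an already bounded exponent and hence bounded height. This second step has no counterpart in your write-up, so in the cases $d=1$ or $t=1$ one of the exponents is left unbounded by your argument.
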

	
	Note that the assumptions in Theorem \ref{p8-thm:diffequaldiffdom} already imply that $ G_n $ has no multiple values. More precisely it is implied by the assumption $ \alpha_1 \notin \CC $ in the case $ d = 1 $ and by the fact that $ \alpha_1 $ is the $ \nu $-dominant root with immediate effect for $ d > 1 $. The same holds for $ H_m $.
	
	One could ask whether we can relax the dominant root condition.
	Indeed, it is possible to relax the dominant root condition to the cost of requiring more multiplicative independence and prove a similar statement. 
	To fix ideas we restrict ourselves to the polynomial case, i.e. we will assume that $ a_i, \alpha_i \in \CC[x] $.
	Moreover, we will assume (by throwing away the first few elements of the recurrences if necesssary; compare with the dominant root case) that $ \deg \alpha_i > \deg \alpha_j $ implies $ \deg (a_i \alpha_i^n) > \deg (a_j \alpha_j^n) $ for all $ i,j $ and all $ n \in \NN $, and refer to this by saying that \emph{$ G_n $ has weak coefficients}.
	
	Now we describe what we mean by the \emph{relevant set of characteristic roots} for a recurrence with weak coefficients.
	In a preparatory step we order the characteristic roots $ \alpha_i $ such that
	\begin{equation*}
		\deg (a_1 \alpha_1^n) = \cdots = \deg (a_k \alpha_k^n) > \deg (a_{k+1} \alpha_{k+1}^n) \geq \cdots \geq \deg (a_d \alpha_d^n).
	\end{equation*}
	Then we call the set $ R_G = \set{\alpha_1, \ldots, \alpha_k} $ the \emph{relevant set of characteristic roots of $ G_n $}.
	In this language our result is the following statement:
	
	\begin{mythm}
		\label{p8-thm:diffequaldiffrel}
		Let $ G_n = a_1 \alpha_1^n + \cdots + a_d \alpha_d^n $ and $ H_m = b_1 \beta_1^m + \cdots + b_t \beta_t^m $ be two simple linear recurrence sequences such that $ a_i, \alpha_i, b_j, \beta_j \in \CC[x] $ for $ i=1,\ldots,d $ and $ j=1,\ldots,t $.
		Assume that $ G_n $ and $ H_m $ both have weak coefficients.
		Denote by $ R_G $ and $ R_H $ the relevant sets of characteristic roots of $ G_n $ and $ H_m $, respectively, and assume that no element of $ R_G $ or $ R_H $ as well as no quotient of two distinct elements of $ R_G $ or $ R_H $ lies in $ \CC $.
		Moreover, suppose that all pairs in the set $ \set{(\alpha_1, \gamma) : \gamma \in R_H} \cup \set{(\delta, \beta_1) : \delta \in R_G} $ are pairs of multiplicatively independent elements, and that neither $ G_n $ nor $ H_m $ has multiple values.
		Then there exists an effectively computable constant $ C $, which depends only on the $ a_i, \alpha_i, b_j, \beta_j $ and $ \gfr $, such that for all distinct $ (n_1,m_1), (n_2,m_2) \in \NN^2 $ with $ G_{n_1} - H_{m_1} = G_{n_2} - H_{m_2} $ we have
		\begin{equation*}
			\max \setb{n_1,m_1,n_2,m_2} \leq C.
		\end{equation*}
	\end{mythm}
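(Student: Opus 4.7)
The plan is to adapt the strategy of Theorem \ref{p8-thm:diffequaldiffdom}, with the unique $\nu$-dominant root argument replaced by a careful leading-degree analysis that exploits the weak coefficients condition. I will write the hypothesized coincidence $G_{n_1}-H_{m_1}=G_{n_2}-H_{m_2}$ as the vanishing sum
\begin{equation*}
\Sigma := \sum_{i=1}^{d} a_i \alpha_i^{n_1} - \sum_{i=1}^{d} a_i \alpha_i^{n_2} - \sum_{j=1}^{t} b_j \beta_j^{m_1} + \sum_{j=1}^{t} b_j \beta_j^{m_2} = 0.
\end{equation*}
Since neither $G_n$ nor $H_m$ has multiple values, $n_1=n_2$ forces $m_1=m_2$ (and vice versa), so without loss of generality $n_1>n_2$ and $m_1 \neq m_2$; the subcases $m_1 > m_2$ and $m_1 < m_2$ will be handled symmetrically.

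The main tool will be the function field Brownawell-Masser inequality already invoked for Theorems \ref{p8-thm:diffequalc} and \ref{p8-thm:diffequaldiffdom}: for any vanishing $F$-sum without proper vanishing subsums, one obtains an effective upper bound on the height of each ratio of two summands, depending only on $\gfr$ and on the number of summands. If $\Sigma$ has no proper vanishing subsum, applying this to $a_1\alpha_1^{n_1}$ and $a_1\alpha_1^{n_2}$ bounds the height of $\alpha_1^{n_1-n_2}$, hence $n_1-n_2$ (since $\alpha_1 \in R_G$ is not in $\CC$ and therefore has positive height), and the analogous estimate bounds $m_1-m_2$; a degree comparison at the infinite place for $G_{n_1}-G_{n_2}$ and $H_{m_1}-H_{m_2}$, available thanks to the weak coefficients property, then bounds the individual indices.

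Otherwise I will partition $\Sigma$ into minimal vanishing subsums and analyze each according to type: (I) pure-$G$, (II) pure-$H$, or (III) mixed. For (I) and (II), the weak coefficients property forces the maximal-degree summands of such a subsum to come from the relevant set $R_G$ (respectively $R_H$); their required cancellation at the infinite place either exhibits a quotient $\alpha_i/\alpha_j$ of distinct elements of $R_G$ as lying in $\CC$ (excluded by hypothesis) or produces a relation to which Brownawell-Masser, applied recursively to subsums of strictly smaller cardinality, delivers an effective bound on $n_1-n_2$. For (III), Brownawell-Masser applied to the mixed subsum bounds the height of a ratio $a_i\alpha_i^{n_\star}/(b_j\beta_j^{m_\star})$; combined with the top-degree bookkeeping on $\Sigma$ (again via weak coefficients), such a subsum necessarily involves a term with characteristic root $\alpha_1 \in R_G$ or $\beta_1 \in R_H$, and then the multiplicative independence of the pairs in $\set{(\alpha_1,\gamma) : \gamma \in R_H} \cup \set{(\delta,\beta_1) : \delta \in R_G}$ converts the height bound into explicit bounds on the exponents.

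The hardest step, absent in Theorem \ref{p8-thm:diffequaldiffdom}, is the interplay between the mixed case (III) and the recursive descent in (I) and (II). When $R_G$ and $R_H$ contain several elements, the cancellation of leading coefficients at the infinite place need not involve all of $R_G$, so I must verify that each recursion either ends with a bounded-height conclusion or forces a forbidden $\CC$-quotient or multiplicative dependence. Making the iteration quantitative, and packaging the resulting case bounds into a single effectively computable constant $C=C(a_i,\alpha_i,b_j,\beta_j,\gfr)$, will constitute the technical core of the proof.
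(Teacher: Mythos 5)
Your plan correctly identifies the tools (Theorem \ref{p8-thm:brownawellmasser}, leading-degree analysis via weak coefficients, multiplicative independence), but two steps as written do not work, and the step you yourself flag as ``the technical core'' is exactly the content of the theorem, so the argument is incomplete. First, in the case where $\Sigma$ has no proper vanishing subsum, bounding $n_1-n_2$ and $m_1-m_2$ and then making ``a degree comparison at the infinite place'' cannot bound the individual indices: the degree is a single valuation, so the comparison only yields a linear relation of the shape $n_1\deg\alpha_1-m_1\deg\beta_1=O(1)$, which has unboundedly many solutions. What is needed is a second independent valuation, i.e.\ precisely Lemma \ref{p8-lemma:quotofindep}: in this case Theorem \ref{p8-thm:brownawellmasser} already bounds $\Hc\bigl(a_1\alpha_1^{n_1}/(b_1\beta_1^{m_1})\bigr)$, and the multiplicative independence of $\alpha_1,\beta_1$ then bounds $n_1,m_1$ directly — your detour through differences and degrees proves nothing.

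Second, and more seriously, your treatment of the general case rests on the claim that any mixed minimal vanishing subsum ``necessarily involves a term with characteristic root $\alpha_1$ or $\beta_1$''; this is unjustified (a mixed subsum can consist entirely of terms $a_i\alpha_i^{n_\star}$, $b_j\beta_j^{m_\star}$ with $\alpha_i\notin R_G$, $\beta_j\notin R_H$, for which neither the $\CC$-quotient exclusion nor the hypothesis on the pairs $\set{(\alpha_1,\gamma)}\cup\set{(\delta,\beta_1)}$ gives any information, since multiplicative independence among the $\alpha_i$, or between arbitrary $\alpha_i$ and $\beta_j$, is not assumed). Likewise, the bounds on $n_1-n_2$ you extract from pure-$G$ subsums are not by themselves useful: the theorem requires bounds on $\max\setb{n_1,m_1,n_2,m_2}$, and ruling out infinitely many solutions of $G_{n+r}-G_n=H_{m+s}-H_m$ for fixed small $r,s$ is again the whole problem. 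The paper's proof avoids all of this by anchoring the analysis at the single term of globally maximal degree: after the normalization $\deg(a_1\alpha_1^N)\geq\deg(b_1\beta_1^M)$ with $N=\max\setb{n_1,n_2}$, $M=\max\setb{m_1,m_2}$, the minimal vanishing subsum containing $a_1\alpha_1^N$ must, for degree reasons, contain a second term of the same top degree, which can only be some $a_{i}\alpha_{i}^N$ with $\alpha_i\in R_G$ (then $N$ is bounded since $\alpha_i/\alpha_1\notin\CC$) or some $b_j\beta_j^M$ with $\beta_j\in R_H$ (then the hypothesis pair $(\alpha_1,\beta_j)$ and Lemma \ref{p8-lemma:quotofindep} bound $N$ and $M$); afterwards $M$ is bounded from the subsum containing $b_1\beta_1^M$, where no further independence is needed because any $G$-partner there has exponent at most the already bounded $N$, and the cases $d=1$ or $t=1$ are handled similarly (with $d=t=1$ reduced to Theorem \ref{p8-thm:diffequaldiffdom}). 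Until you supply an argument of this kind that pins down which terms must share a minimal vanishing subsum and shows the hypotheses suffice there, your proposal has a genuine gap rather than merely missing details.
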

	
	Note that this theorem can be generalized to more general elements in $ F $ if we replace the degree conditions by suitable valuation conditions as we have done in Theorem \ref{p8-thm:diffequaldiffdom}.
	
	\section{Preliminaries}
	
	The proofs in the next section will make use of height functions in function fields. Let us therefore define the height of an element $ f \in F^* $ by
	\begin{equation*}
		\Hc(f) := - \sum_{\nu} \min \setb{0, \nu(f)} = \sum_{\nu} \max \setb{0, \nu(f)}
	\end{equation*}
	where the sum is taken over all valuations on the function field $ F / \CC $. Additionally we define $ \Hc(0) = \infty $.
	This height function satisfies some basic properties that are listed in the lemma below which is proven in \cite{fuchs-karolus-kreso-2019}:
	
	\begin{mylemma}
		\label{p8-lemma:heightproperties}
		Denote as above by $ \Hc $ the height on $ F/\CC $. Then for $ f,g \in F^* $ the following properties hold:
		\begin{enumerate}[a)]
			\item $ \Hc(f) \geq 0 $ and $ \Hc(f) = \Hc(1/f) $,
			\item $ \Hc(f) - \Hc(g) \leq \Hc(f+g) \leq \Hc(f) + \Hc(g) $,
			\item $ \Hc(f) - \Hc(g) \leq \Hc(fg) \leq \Hc(f) + \Hc(g) $,
			\item $ \Hc(f^n) = \abs{n} \cdot \Hc(f) $,
			\item $ \Hc(f) = 0 \iff f \in \CC^* $,
			\item $ \Hc(A(f)) = \deg A \cdot \Hc(f) $ for any $ A \in \CC[T] \setminus \set{0} $.
		\end{enumerate}
	\end{mylemma}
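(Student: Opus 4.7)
The plan is to verify each property directly from the definition $\Hc(f) = \sum_\nu \max(0,\nu(f)) = -\sum_\nu \min(0,\nu(f))$ (the equality of these two expressions being exactly the sum formula $\sum_\nu \nu(f)=0$), combined with the two fundamental properties of valuations, namely $\nu(fg)=\nu(f)+\nu(g)$ and $\nu(f+g)\geq\min\{\nu(f),\nu(g)\}$.

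First I would dispatch (a): nonnegativity is immediate from the definition, and since $\nu(1/f)=-\nu(f)$ we get $\max\{0,\nu(1/f)\}=-\min\{0,\nu(f)\}$, so $\Hc(1/f)=-\sum_\nu\min\{0,\nu(f)\}=\Hc(f)$ by the equivalent expression above. For the upper bound in (c), write $\max\{0,\nu(f)+\nu(g)\}\leq\max\{0,\nu(f)\}+\max\{0,\nu(g)\}$ and sum over $\nu$; the upper bound in (b) follows the same way from $\nu(f+g)\geq\min\{\nu(f),\nu(g)\}$, after separating the valuations where this minimum is negative. The lower bounds in (b) and (c) are then formal consequences: apply the already proven upper bounds to $f=(f+g)-g$ and to $f=(fg)\cdot g^{-1}$ respectively, using (a) to handle inversion. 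Property (d) reduces to $\max\{0,n\nu(f)\}=|n|\max\{0,\nu(f)\}$ (resp.\ $|n|\max\{0,-\nu(f)\}$ for $n<0$), which follows by splitting into positive and negative $n$ and using the sum formula.

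For (e), one direction is trivial: if $f\in\CC^*$, then every valuation $\nu$ satisfies $\nu(f)=0$, hence $\Hc(f)=0$. For the converse, $\Hc(f)=0$ forces $\nu(f)\geq 0$ for every $\nu$ and (by the alternative expression) also $\nu(f)\leq 0$ for every $\nu$; thus $f$ has neither zeros nor poles, which over an algebraically closed constant field $\CC$ forces $f\in\CC^*$ (this is the standard fact that a nonzero function with no poles on a complete smooth curve over $\CC$ is constant). For (f), factor $A(T)=c\prod_{i=1}^{\deg A}(T-r_i)$ with $c,r_i\in\CC$; then $A(f)=c\prod_i(f-r_i)$, and each valuation satisfies $\nu(A(f))=\sum_i\nu(f-r_i)$ since $\nu(c)=0$. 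The pole part of $f-r_i$ equals the pole part of $f$ (a shift by a constant changes no valuation $\nu$ with $\nu(f)<0$, and introduces no new poles), so $\Hc(f-r_i)=\Hc(f)$ for each $i$, and summing the pole contributions over $i$ yields $\Hc(A(f))=\deg A\cdot\Hc(f)$.

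The only nontrivial step is the converse direction of (e), which rests on the function-field analogue of Liouville's theorem; everything else is a routine manipulation of the two formulas for $\Hc$ using the valuation inequalities. Once (a)--(c) are in place, (d) and (f) are really just bookkeeping with these, so I expect no real obstacles.
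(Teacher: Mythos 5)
Your proof is correct. Note that the paper itself does not prove this lemma at all: it simply cites \cite{fuchs-karolus-kreso-2019}, so there is no in-paper argument to compare against; what you have written is the standard direct verification from the definition, and it goes through. The two genuinely non-formal ingredients are exactly where you place them: the converse in (e), which needs the fact that an element of $F^*$ without poles lies in the constant field, which is $\CC$ since $\CC$ is algebraically closed; and in (f) the factorization $A(T)=c\prod_i(T-r_i)$ over $\CC$ together with the observation that at any valuation $\nu$ either $\nu(f)<0$, in which case $\nu(f-r_i)=\nu(f)$ for every $i$, or $\nu(f)\geq 0$, in which case $\nu(f-r_i)\geq 0$ for every $i$; this is what lets you sum the pole contributions and conclude $\Hc(A(f))=\deg A\cdot\Hc(f)$, and your parenthetical remark contains precisely this. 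Everything else (the two expressions for $\Hc$ agreeing via the sum formula, the subadditivity statements, deducing the lower bounds in (b) and (c) from the upper bounds applied to $f=(f+g)-g$ and $f=(fg)g^{-1}$, and (d) via (a)) is routine and correctly handled. The only caveats are edge cases already latent in the statement rather than in your argument: with the convention $\Hc(0)=\infty$, the upper bound in (b) must implicitly assume $f+g\neq 0$, and (f) assumes $A(f)\neq 0$ (which can only fail for constant $f$); it would not hurt to state these exclusions explicitly.
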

	
	For a finite set $ S $ of valuations on $ F $, we denote by $ \Oc_S^* $ the set of $ S $-units in $ F $, i.e. the set
	\begin{equation*}
		\Oc_S^* = \set{f \in F^* : \nu(f) = 0 \text{ for all } \nu \notin S}.
	\end{equation*}
	
	Furthermore, the following theorem due to Brownawell and Masser plays an essential role within our proofs. It is an immediate consequence of Theorem B in \cite{brownawell-masser-1986}:
	
	\begin{mythm}[Brownawell-Masser]
		\label{p8-thm:brownawellmasser}
		Let $ F/\CC $ be a function field in one variable of genus $ \gfr $. Moreover, for a finite set $ S $ of valuations, let $ u_1,\ldots,u_k $ be $ S $-units and
		\begin{equation*}
			1 + u_1 + \cdots + u_k = 0,
		\end{equation*}
		where no proper subsum of the left hand side vanishes. Then we have
		\begin{equation*}
			\max_{i=1,\ldots,k} \Hc(u_i) \leq \bino{k}{2} \left( \abs{S} + \max \setb{0, 2\gfr-2} \right).
		\end{equation*}
	\end{mythm}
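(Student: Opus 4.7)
The plan is to use the classical Wronskian method introduced by Brownawell and Masser. Fix a $\CC$-derivation $D$ on $F$, for instance $D = d/dx$ for a separating element $x$ over $\CC$. The key preliminary fact is that for any non-zero $f \in F$ the logarithmic derivative $Df/f$ has poles only at the zeros and poles of $f$ together with a fixed finite set of ``wild'' places (those in the support of the differential $dx$), with the pole order at a tame place being at most one; the total wild contribution sums to at most $\deg(dx) = 2\gfr - 2$ over all places. In particular, if $u$ is an $S$-unit, then the pole divisor of $Du/u$ is confined to $S$ plus this bounded wild set.

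Next I would unpack the no-subsum-vanishing hypothesis by setting $u_0 := 1$, giving the single relation $u_0 + u_1 + \cdots + u_k = 0$. The hypothesis then says no proper sub-collection of $\set{u_0, u_1, \ldots, u_k}$ sums to zero. A short linear-algebra argument shows that, after relabelling, $u_1, \ldots, u_k$ are linearly independent over $\CC$, so the Wronskian $W := W(u_1, \ldots, u_k) = \det(D^{i-1} u_j)_{1 \leq i,j \leq k}$ is non-zero. Moreover, since $Du_0 = 0$, differentiating the relation produces a $k \times (k+1)$ matrix whose columns sum to zero; consequently the ``shifted'' Wronskians $W_i$ obtained by replacing a given column by $(1, 0, \ldots, 0)^T$ all agree up to sign with $W$.

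The third step is to factor $W = u_1 u_2 \cdots u_k \cdot \widetilde{W}$, where $\widetilde{W}$ is a determinant whose entries are polynomials in the iterated logarithmic derivatives of the $u_j$. The preliminary fact then localises the pole divisor of each such entry to $S$ together with the wild set, and expanding the $k \times k$ determinant by the Leibniz formula produces a pole of total order at most $0 + 1 + \cdots + (k-1) = \bino{k}{2}$ at any fixed place. Combining this with the identities from the previous step gives a symmetric expression for each $u_j$ as a ratio of such Wronskian quantities.

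Finally, applying the sum formula $\sum_\nu \nu(u_j) = 0$ to this ratio converts the local pole estimates into the global height bound $\Hc(u_j) \leq \bino{k}{2} \left( \abs{S} + \max \setb{0, 2\gfr - 2} \right)$. The main technical obstacle is the local bookkeeping in the third step: one must verify that each $Du_j/u_j$ really has only simple poles at the zeros and poles of $u_j$ (tameness of $D$ at the place in question), identify the wild set precisely with the support of the canonical divisor so that its total contribution is controlled by $2\gfr - 2$, and then track how the Leibniz expansion distributes the pole orders across the many terms of the determinant without overcounting. Once this is in place, symmetry yields the same bound for every $u_i$.
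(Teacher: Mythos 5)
First, a point of reference: the paper itself does not prove this statement; it imports it verbatim, remarking only that it is an immediate consequence of Theorem B of Brownawell and Masser \cite{brownawell-masser-1986}. So your sketch has to be measured against the original Wronskian method of that paper, which is indeed the strategy you outline (logarithmic derivatives with poles confined to $S$ plus a set controlled by a canonical divisor, a Wronskian determinant, the pole count $0+1+\cdots+(k-1)=\binom{k}{2}$, and the sum formula).

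There is, however, a genuine gap at your second step. The hypothesis that no proper subsum of $1+u_1+\cdots+u_k$ vanishes does \emph{not} imply that $u_1,\ldots,u_k$ are linearly independent over $\CC$, so no ``short linear-algebra argument'' can deliver this. Concretely, in $F=\CC(x)$ take $u_1=-1-2x$ and $u_2=u_3=x$: all three are $S$-units for $S$ consisting of the places $0$, $-1/2$ and $\infty$, we have $1+u_1+u_2+u_3=0$, and one checks that no proper subsum (singleton, pair or triple) vanishes; yet $u_2-u_3=0$ is a nontrivial $\CC$-linear relation, so the Wronskian $W(u_1,u_2,u_3)$ is identically zero and your third and fourth steps collapse. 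This is exactly the point of the distinction in \cite{brownawell-masser-1986} between their Theorem A, which assumes linear independence and is proved by the computation you describe, and Theorem B (the statement at hand), whose hypothesis is only the non-vanishing of subsums; deducing B requires an additional reduction that copes with possible $\CC$-linear dependences among the $u_i$ while still producing the constant $\binom{k}{2}$, and that reduction is missing from your proposal. Secondary and in principle repairable issues: the local analysis of $Du/u$ and the bookkeeping yielding the term $\max\setb{0,2\gfr-2}$ are only asserted (the correction terms must be tied to a canonical divisor or to the different of the chosen separating element), and the claim that the shifted Wronskians all agree with $W$ up to sign needs an actual argument; but the linear-independence step is the substantive error to fix.
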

	
	\section{Proofs}
	
	During this section $ C_1, C_2, \ldots $ will denote effectively computable constants. To keep the indices small we will start a new numbering for each proof. Note that therefore there is no dependence between the constants occurring in different proofs.
	We begin with the proof of our first theorem:
	
	\begin{proof}[Proof of Theorem \ref{p8-thm:diffequalc}]
		Let $ G_n, H_m, f $ be as in the theorem. If we insert the sum representations of $ G_n $ and $ H_m $ into the equation $ G_n - H_m = f $, bring all terms to one side, and divide by $ f $, we get
		\begin{equation}
			\label{p8-eq:pr1starteq}
			1 + \frac{b_1}{f} \beta_1^m + \cdots + \frac{b_t}{f} \beta_t^m - \frac{a_1}{f} \alpha_1^n - \cdots - \frac{a_d}{f} \alpha_d^n = 0.
		\end{equation}
		
		Now let $ S $ be a finite set of valuations such that $ f $ and all $ \alpha_i $ and $ a_i $ for $ i = 1,\ldots,d $ as well as all $ \beta_j $ and $ b_j $ for $ j = 1,\ldots,t $ are $ S $-units.
		We define
		\begin{equation*}
			C_1 := \bino{d+t}{2} \left( \abs{S} + \max \setb{0, 2\gfr-2} \right)
		\end{equation*}
		and assume that $ (n,m) \in \NN^2 $ satisfies equation \eqref{p8-eq:pr1starteq}.
		
		Our plan is to apply Theorem \ref{p8-thm:brownawellmasser}.
		Therefore we consider a minimal vanishing subsum of the left hand side of \eqref{p8-eq:pr1starteq}, i.e. no proper sub-subsum of this subsum vanishes, which contains the summand $ 1 $.
		This subsum contains at least one other summand.
		Without loss of generality we may assume that the summand $ -\frac{a_{i_0}}{f} \alpha_{i_0}^n $ is contained therein.
		By Theorem \ref{p8-thm:brownawellmasser} we get the upper bound
		\begin{equation*}
			\Hc \left( -\frac{a_{i_0}}{f} \alpha_{i_0}^n \right) \leq C_1.
		\end{equation*}
		Thus we have
		\begin{align*}
			n \cdot \Hc(\alpha_{i_0}) &= \Hc(\alpha_{i_0}^n) = \Hc \left( -\frac{a_{i_0}}{f} \alpha_{i_0}^n \cdot \frac{-f}{a_{i_0}} \right) \\
			&\leq \Hc \left( -\frac{a_{i_0}}{f} \alpha_{i_0}^n \right) + \Hc\left( \frac{-f}{a_{i_0}} \right) \\
			&\leq C_1 + \max \setb{\max_{i=1,\ldots,d} \Hc\left( \frac{-f}{a_i} \right), \max_{j=1,\ldots,t} \Hc\left( \frac{-f}{b_j} \right)} =: C_2
		\end{align*}
		and
		\begin{equation*}
			n \leq \frac{C_2}{\Hc(\alpha_{i_0})} \leq \frac{C_2}{\min \setb{\min_{i=1,\ldots,d} \Hc(\alpha_i), \min_{j=1,\ldots,t} \Hc(\beta_j)}} =: C_3.
		\end{equation*}
		
		Now there are two possible cases. If also a summand with an $ \beta_j $ is contained in the minimal vanishing subsum with $ 1 $, then the same calculations show that $ m \leq C_3 $ and we are done.
		Otherwise we consider a minimal vanishing subsum of the left hand side of \eqref{p8-eq:pr1starteq} of the form
		\begin{equation*}
			\frac{b_1}{f} \beta_1^m + z_1 + \cdots + z_k = 0.
		\end{equation*}
		After dividing by $ z_1 $ we can apply Theorem \ref{p8-thm:brownawellmasser} to this subsum which yields
		\begin{equation*}
			\Hc \left( \frac{b_1}{fz_1} \beta_1^m \right) \leq C_1.
		\end{equation*}
		
		Let us first assume that $ z_1 = \frac{b_{j_0}}{f} \beta_{j_0}^m $ for $ j_0 \neq 1 $.
		Together with the bound in the last displayed expression we get
		\begin{align*}
			m \cdot \Hc \left( \frac{\beta_1}{\beta_{j_0}} \right) &= \Hc \left( \left( \frac{\beta_1}{\beta_{j_0}} \right)^m \right) = \Hc \left( \frac{b_1}{b_{j_0}} \left( \frac{\beta_1}{\beta_{j_0}} \right)^m \cdot \frac{b_{j_0}}{b_1} \right) \\
			&\leq \Hc \left( \frac{b_1}{b_{j_0}} \left( \frac{\beta_1}{\beta_{j_0}} \right)^m \right) + \Hc \left(\frac{b_{j_0}}{b_1} \right) \\
			&\leq C_1 + \max \setb{\max_{i \neq j} \Hc\left( \frac{a_i}{a_j} \right), \max_{i \neq j} \Hc\left( \frac{b_i}{b_j} \right)} =: C_4
		\end{align*}
		and
		\begin{equation*}
			m \leq \frac{C_4}{\Hc \left( \frac{\beta_1}{\beta_{j_0}} \right)} \leq \frac{C_4}{\min \setb{\min_{i \neq j} \Hc\left( \frac{\alpha_i}{\alpha_j} \right), \min_{i \neq j} \Hc\left( \frac{\beta_i}{\beta_j} \right)}} =: C_5.
		\end{equation*}
		
		Assume now that $ z_1 = -\frac{a_{i_1}}{f} \alpha_{i_1}^n $ for some $ i_1 $. In this situation we end up with the bounds
		\begin{align*}
			m \cdot \Hc(\beta_1) &= \Hc(\beta_1^m) \leq \Hc(b_1 \beta_1^m) + \Hc(b_1) \\
			&\leq \Hc \left( -\frac{b_1 \beta_1^m}{a_{i_1} \alpha_{i_1}^n} \right) + \Hc(a_{i_1} \alpha_{i_1}^n) + \Hc(b_1) \\
			&\leq C_1 + \Hc(a_{i_1}) + n \cdot \Hc(\alpha_{i_1}) + \Hc(b_1) \\
			&\leq C_1 + \max_{i=1,\ldots,d} \Hc(a_i) + \max_{j=1,\ldots,t} \Hc(b_j) \\
			&\hspace*{1cm}+ C_3 \cdot \max \setb{\max_{i=1,\ldots,d} \Hc(\alpha_i), \max_{j=1,\ldots,t} \Hc(\beta_j)} \\
			&=: C_6
		\end{align*}
		and
		\begin{equation*}
			m \leq \frac{C_6}{\Hc(\beta_1)} \leq \frac{C_6}{\min \setb{\min_{i=1,\ldots,d} \Hc(\alpha_i), \min_{j=1,\ldots,t} \Hc(\beta_j)}} =: C_7.
		\end{equation*}
		
		Thus, by putting all things together, we get for the exponential variables $ n,m $ the final bound
		\begin{equation*}
			\max \setb{n,m} \leq \max \setb{C_3, C_5, C_7},
		\end{equation*}
		which proves the theorem.
	\end{proof}
	
	In the special case of pure powers of polynomials the proof as well as the constant becomes much easier:
	
	\begin{proof}[Proof of Corollary \ref{p8-cor:specialcaseconst}]
		Let $ p,q $ be non-constant polynomials in $ \CC[x] $ and $ f $ a non-zero polynomial in $ \CC[x] $.
		For $ (n,m) \in \NN^2 $ with $ p^n - q^m = f $ we get the equation
		\begin{equation*}
			1 + \frac{1}{f} q^m - \frac{1}{f} p^n = 0.
		\end{equation*}
		Since there are only three summands and each of them is non-zero, there cannot be a proper vanishing subsum.
		Let $ S $ be the set containing $ \nu_{\infty} $ as well as the valuations corresponding to the zeros of $ p,q,f $.
		As we have three summands and the genus of $ \CC(x) $ is zero, the bound in Theorem \ref{p8-thm:brownawellmasser} simplifies to $ \abs{S} $, which can be bounded above by
		\begin{equation*}
			\abs{S} \leq 1 + \deg p + \deg q + \deg f.
		\end{equation*}
		Applying Theorem \ref{p8-thm:brownawellmasser} gives then the upper bounds $ \Hc(p^n/f) \leq \abs{S} $ and $ \Hc(q^m/f) \leq \abs{S} $.
		Thus we get
		\begin{align*}
			n \cdot \deg p &= n \cdot \Hc(p) = \Hc(p^n) \leq \Hc(p^n/f) + \Hc(f) \leq \abs{S} + \deg f \\
			&\leq 1 + \deg p + \deg q + 2\deg f
		\end{align*}
		and
		\begin{equation*}
			n \leq \frac{1 + \deg p + \deg q + 2\deg f}{\min \setb{\deg p, \deg q}}.
		\end{equation*}
		The same bound also holds for $ m $, with the same calculations.
	\end{proof}
	
	In preparation of the proof of the other theorems we will formulate and prove a short lemma which will be used several times later on:
	
	\begin{mylemma}
		\label{p8-lemma:quotofindep}
		Let $ \gamma, \delta \in F \setminus \CC $ be multiplicatively independent and $ n,m \in \NN $. Assume that
		\begin{equation}
			\label{p8-eq:heightqtbound}
			\Hc \left( \frac{\gamma^n}{\delta^m} \right) \leq L.
		\end{equation}
		Then there exists an effectively computable constant $ C $, depending only on $ \gamma, \delta, \gfr $ and $ L $, such that
		\begin{equation*}
			\max \setb{n,m} \leq C.
		\end{equation*}
	\end{mylemma}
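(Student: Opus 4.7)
The plan is to convert the single bound $\Hc(\gamma^n/\delta^m) \leq L$ into one linear inequality per valuation, then to use the multiplicative independence of $\gamma$ and $\delta$ to single out two such inequalities that are linearly independent over $\QQ$, and finally to solve the resulting $2\times 2$ system for $n$ and $m$.

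First, I would observe that for any $u \in F^*$ and any valuation $\nu$ one has $\abs{\nu(u)} \leq \Hc(u)$, since $\max\setb{0,\nu(u)}$ is a single non-negative summand in $\Hc(u) = \sum_{\nu} \max\setb{0,\nu(u)}$, and the same argument applied to $1/u$ together with Lemma \ref{p8-lemma:heightproperties}~a) controls $-\nu(u)$. Applied to $u = \gamma^n/\delta^m$, whose $\nu$-valuation equals $n\nu(\gamma) - m\nu(\delta)$, this yields
\[
\abs{n\nu(\gamma) - m\nu(\delta)} \leq L \quad \text{for every valuation } \nu \text{ of } F.
\]

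Second, I would claim that there exist two valuations $\nu_1, \nu_2$ such that the integer vectors $(\nu_1(\gamma), \nu_1(\delta))$ and $(\nu_2(\gamma), \nu_2(\delta))$ are linearly independent over $\QQ$. Otherwise all vectors $(\nu(\gamma), \nu(\delta))$ would lie on a common line through the origin in $\QQ^2$; since $\gamma \notin \CC$, Lemma \ref{p8-lemma:heightproperties}~e) guarantees that at least one of these vectors is nonzero, so there would exist a nonzero pair $(p,q) \in \ZZ^2$ with $q\nu(\gamma) - p\nu(\delta) = 0$ for every $\nu$. Again by Lemma \ref{p8-lemma:heightproperties}~e) this forces $\gamma^q/\delta^p \in \CC^*$, contradicting the multiplicative independence of $\gamma$ and $\delta$. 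Because only finitely many valuations act nontrivially on $\gamma$ or $\delta$, the pair $\nu_1,\nu_2$ can be located effectively by scanning the finite list of supports.

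Feeding $\nu_1,\nu_2$ into the inequality from the first step produces a $2\times 2$ linear system in $(n,m)$ with integer determinant $D = \nu_1(\gamma)\nu_2(\delta) - \nu_2(\gamma)\nu_1(\delta) \neq 0$, hence $\abs{D} \geq 1$, and with right-hand side bounded by $L$ in absolute value. Cramer's rule then yields explicit effective bounds of the shape
\[
\max\setb{n,m} \leq L \cdot \bigl(\abs{\nu_1(\gamma)} + \abs{\nu_2(\gamma)} + \abs{\nu_1(\delta)} + \abs{\nu_2(\delta)}\bigr),
\]
depending only on $\gamma,\delta,L$, and $\gfr$. The main difficulty in the plan is the second step: everything hinges on converting the multiplicative independence hypothesis into the existence of two $\QQ$-linearly independent valuation vectors; once this is achieved, the remainder is elementary integer linear algebra.
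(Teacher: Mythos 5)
Your proof is correct and follows essentially the same route as the paper: the height bound is converted into the valuation-wise inequalities $\abs{n\nu(\gamma)-m\nu(\delta)}\leq L$, and multiplicative independence is used to extract two valuations giving independent linear constraints that pin down $(n,m)$. The differences are only in execution: the paper first splits into cases according to whether $\gamma$ and $\delta$ share all zeros and poles and bounds $m$ by a separate height computation, whereas you skip the case split, explicitly justify (via Lemma \ref{p8-lemma:heightproperties}~e)) the existence of two $\QQ$-linearly independent valuation vectors -- a point the paper merely asserts -- and finish symmetrically with Cramer's rule.
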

	
	\begin{proof}
		If $ \gamma $ has a zero that is not a zero of $ \delta $, then we have $ n \leq L $. Analogously, if $ \gamma $ has a pole that is not a pole of $ \delta $, we also have $ n \leq L $.
		If vice versa $ \delta $ has a zero/pole that is not a zero/pole of $ \gamma $, this would imply $ m \leq L $.
		Thus without loss of generality we may assume that either $ n \leq L $ or each zero/pole of $ \gamma $ is also a zero/pole of $ \delta $ and vice versa.
		
		Let us now focus on the second case that $ \gamma $ and $ \delta $ have the same zeros and poles. Since $ \gamma $ and $ \delta $ are multiplicatively independent and non-constant, there exist two valuations $ \nu $ and $ \mu $ such that $ \nu(\gamma) \nu(\delta) \mu(\gamma) \mu(\delta) \neq 0 $ and
		\begin{equation*}
			\frac{\nu(\gamma)}{\nu(\delta)} \neq \frac{\mu(\gamma)}{\mu(\delta)}.
		\end{equation*}
		From inequality \eqref{p8-eq:heightqtbound} we get
		\begin{align*}
			\abs{n \cdot \nu(\gamma) - m \cdot \nu(\delta)} &\leq L \\
			\abs{n \cdot \mu(\gamma) - m \cdot \mu(\delta)} &\leq L.
		\end{align*}
		Therefore we have
		\begin{align*}
			\abs{\frac{\nu(\gamma)}{\nu(\delta)} - \frac{\mu(\gamma)}{\mu(\delta)}} \cdot n &= \abs{n \cdot \frac{\nu(\gamma)}{\nu(\delta)} - n \cdot \frac{\mu(\gamma)}{\mu(\delta)}} \\
			&\leq \abs{n \cdot \frac{\nu(\gamma)}{\nu(\delta)} - m} + \abs{n \cdot \frac{\mu(\gamma)}{\mu(\delta)} - m} \\
			&\leq \frac{L}{\abs{\nu(\delta)}} + \frac{L}{\abs{\mu(\delta)}} =: C_1
		\end{align*}
		and
		\begin{equation*}
			n \leq \frac{C_1}{\abs{\frac{\nu(\gamma)}{\nu(\delta)} - \frac{\mu(\gamma)}{\mu(\delta)}}} =: C_2.
		\end{equation*}
		Hence we have the upper bound $ n \leq \max \setb{L, C_2} =: C_3 $.
		
		Using properties of the height in the same manner as in the proofs above, we get also the upper bounds
		\begin{align*}
			m \cdot \Hc(\delta) &= \Hc(\delta^m) \leq \Hc \left( \frac{\delta^m}{\gamma^n} \right) + \Hc(\gamma^n) \\
			&\leq L + n \cdot \Hc(\gamma) \leq  L + C_3 \cdot \Hc(\gamma) =: C_4
		\end{align*}
		and
		\begin{equation*}
			m \leq \frac{C_4}{\Hc(\delta)}.
		\end{equation*}
		This proves the lemma.
	\end{proof}
	
	Now we will use this lemma to prove our second theorem:
	
	\begin{proof}[Proof of Theorem \ref{p8-thm:diffequaldiffdom}]
		Let $ (n_1,m_1), (n_2,m_2) \in \NN^2 $ be two distinct pairs with $ G_{n_1} - H_{m_1} = G_{n_2} - H_{m_2} $.
		Since neither $ G_n $ nor $ H_m $ has multiple values we have $ n_1 \neq n_2 $ and $ m_1 \neq m_2 $.
		We write $ N = \max \setb{n_1,n_2} $ and $ M = \max \setb{m_1,m_2} $.
		If we insert the sum representations into $ G_{n_1} - H_{m_1} = G_{n_2} - H_{m_2} $ and bring all terms to one side, we get
		\begin{multline}
			\label{p8-eq:pr2starteq}
			a_1 \alpha_1^{n_1} + \cdots + a_d \alpha_d^{n_1} - b_1 \beta_1^{m_1} - \cdots - b_t \beta_t^{m_1} \\- a_1 \alpha_1^{n_2} - \cdots - a_d \alpha_d^{n_2} + b_1 \beta_1^{m_2} + \cdots + b_t \beta_t^{m_2} = 0.
		\end{multline}
		
		Let $ S $ be a finite set of valuations such that all $ \alpha_i $ and $ a_i $ for $ i = 1,\ldots,d $ as well as all $ \beta_j $ and $ b_j $ for $ j = 1,\ldots,t $ are $ S $-units.
		
		Now we differ between four cases. Firstly, we assume $ d = t = 1 $.
		Then equation \eqref{p8-eq:pr2starteq} reduces to
		\begin{equation}
			\label{p8-eq:orderone}
			a_1 \alpha_1^{n_1} - b_1 \beta_1^{m_1} - a_1 \alpha_1^{n_2} + b_1 \beta_1^{m_2} = 0.
		\end{equation}
		If there is no proper vanishing subsum, then we divide by $ a_1 \alpha_1^N $ and apply Theorem \ref{p8-thm:brownawellmasser}. Thus there is an effectively computable constant $ C_1 $ such that
		\begin{equation*}
			\Hc \left( \frac{b_1 \beta_1^M}{a_1 \alpha_1^N} \right) \leq C_1.
		\end{equation*}
		Therefore we have
		\begin{equation*}
			\Hc \left( \frac{\beta_1^M}{\alpha_1^N} \right) \leq \Hc \left( \frac{b_1 \beta_1^M}{a_1 \alpha_1^N} \right) + \Hc \left( \frac{b_1}{a_1} \right) \leq C_1 + \Hc \left( \frac{b_1}{a_1} \right) =: C_2
		\end{equation*}
		and by Lemma \ref{p8-lemma:quotofindep}
		\begin{equation*}
			\max \setb{n_1,m_1,n_2,m_2} = \max \setb{N,M} \leq C_3.
		\end{equation*}
		
		Otherwise we can split equation \eqref{p8-eq:orderone} into two vanishing subsums of the shape
		\begin{align*}
			a_1 \alpha_1^{k_1} \pm b_1 \beta_1^{l_1} &= 0 \\
			a_1 \alpha_1^{k_2} \pm b_1 \beta_1^{l_2} &= 0
		\end{align*}
		for $ \set{k_1,k_2} = \set{n_1,n_2} $ and $ \set{l_1,l_2} = \set{m_1,m_2} $.
		Note that $ a_1 \alpha_1^{n_1} - a_1 \alpha_1^{n_2} = 0 $ is not possible since $ \alpha_1 \notin \CC $.
		Thus we have
		\begin{equation*}
			\Hc \left( \frac{\beta_1^{l_1}}{\alpha_1^{k_1}} \right) = \Hc \left( \frac{a_1}{b_1} \right) = \Hc \left( \frac{\beta_1^{l_2}}{\alpha_1^{k_2}} \right)
		\end{equation*}
		and again by Lemma \ref{p8-lemma:quotofindep}
		\begin{equation*}
			\max \setb{n_1,m_1,n_2,m_2} = \max \setb{k_1,l_1,k_2,l_2} \leq C_4.
		\end{equation*}
		
		Secondly, we assume $ d = 1 $ and $ t > 1 $.
		Let $ \set{M,m_0} = \set{m_1,m_2} $ and $ \set{k_1,k_2} = \set{n_1,n_2} $.
		Since $ \beta_1 $ is the $ \nu $-dominant root with immediate effect of $ H_m $ we have
		\begin{align*}
			\nu(b_1 \beta_1^M) &< \nu(b_j \beta_j^M) \\
			\nu(b_1 \beta_1^M) &< \nu(b_1 \beta_1^{m_0}) < \nu(b_j \beta_j^{m_0})
		\end{align*}
		for $ j=2,\ldots,t $.
		We claim that there is a minimal vanishing subsum of \eqref{p8-eq:pr2starteq} containing $ b_1 \beta_1^M $ and $ a_1 \alpha_1^{k_1} $. If this would not be so, then $ b_1 \beta_1^M $ could be written as a sum of elements with $ \nu $-valuation strictly greater than $ \nu(b_1 \beta_1^M) $, but this is impossible.
		Hence we divide this minimal vanishing subsum by $ b_1 \beta_1^M $ and the application of Theorem \ref{p8-thm:brownawellmasser} gives us
		\begin{equation*}
			\Hc \left( \frac{a_1 \alpha_1^{k_1}}{b_1 \beta_1^M} \right) \leq C_5.
		\end{equation*}
		As we have seen above this yields under use of Lemma \ref{p8-lemma:quotofindep}
		\begin{equation*}
			\max \setb{k_1,m_1,m_2} \leq C_6.
		\end{equation*}
		The summand $ a_1 \alpha_1^{k_2} $ must be part of a minimal vanishing subsum with at least one other summand $ \omega $.
		Since the exponential variable occurring in $ \omega $ is among $ k_1,m_1,m_2 $, the height $ \Hc(\omega) $ can be bounded by an effectively computable constant.
		Therefore we have
		\begin{align*}
			k_2 \cdot \Hc(\alpha_1) &= \Hc(\alpha_1^{k_2}) \leq \Hc(a_1 \alpha_1^{k_2}) + \Hc(a_1) \\
			&\leq \Hc \left( \frac{a_1 \alpha_1^{k_2}}{\omega} \right) + \Hc(\omega) + \Hc(a_1) \leq C_7
		\end{align*}
		because the height of the quotient in the last line is bounded by Theorem \ref{p8-thm:brownawellmasser}.
		Altogether we have
		\begin{equation*}
			\max \setb{n_1,m_1,n_2,m_2} = \max \setb{k_2,k_1,m_1,m_2} \leq C_8.
		\end{equation*}
		The third case that $ d > 1 $ and $ t = 1 $ is handled analogously.
		
		Finally, assume that $ d > 1 $ and $ t > 1 $.
		Let $ \set{M,m_0} = \set{m_1,m_2} $ and $ \set{N,n_0} = \set{n_1,n_2} $.
		Since $ \alpha_1 $ and $ \beta_1 $ are the $ \nu $-dominant roots with immediate effect of $ G_n $ and $ H_m $, respectively, we have
		\begin{align*}
			\nu(a_1 \alpha_1^N) &< \nu(a_i \alpha_i^N) \\
			\nu(a_1 \alpha_1^N) &< \nu(a_1 \alpha_1^{n_0}) < \nu(a_i \alpha_i^{n_0})\\
			\nu(b_1 \beta_1^M) &< \nu(b_j \beta_j^M) \\
			\nu(b_1 \beta_1^M) &< \nu(b_1 \beta_1^{m_0}) < \nu(b_j \beta_j^{m_0})
		\end{align*}
		for $ i=2,\ldots,d $ and $ j=2,\ldots,t $.
		Note that no summand of a vanishing subsum of the left hand side of \eqref{p8-eq:pr2starteq} can have $ \nu $-valuation strictly smaller than each of the other summands.
		Otherwise this element could be written as a sum of elements with $ \nu $-valuation strictly greater than its own $ \nu $-valuation, which is impossible.
		Thus it must hold that $ \nu(a_1 \alpha_1^N) = \nu(b_1 \beta_1^M) $.
		Moreover, $ a_1 \alpha_1^N $ and $ b_1 \beta_1^M $ are in the same minimal vanishing subsum.
		Dividing this minimal vanishing subsum by $ a_1 \alpha_1^N $ and applying Theorem \ref{p8-thm:brownawellmasser} yields
		\begin{equation*}
			\Hc \left( \frac{b_1 \beta_1^M}{a_1 \alpha_1^N} \right) \leq C_9.
		\end{equation*}
		By Lemma \ref{p8-lemma:quotofindep} this implies
		\begin{equation*}
			\max \setb{n_1,m_1,n_2,m_2} = \max \setb{N,M} \leq C_{10}.
		\end{equation*}
		Thus the theorem is proven.
	\end{proof}
	
	Finally, by using similar ideas, we prove our last theorem:
	
	\begin{proof}[Proof of Theorem \ref{p8-thm:diffequaldiffrel}]
		As in the proof of Theorem \ref{p8-thm:diffequaldiffdom} we let $ S $ be a finite set of valuations such that all $ \alpha_i $ and $ a_i $ for $ i = 1,\ldots,d $ as well as all $ \beta_j $ and $ b_j $ for $ j = 1,\ldots,t $ are $ S $-units.
		Let again $ (n_1,m_1), (n_2,m_2) \in \NN^2 $ be two distinct pairs with $ G_{n_1} - H_{m_1} = G_{n_2} - H_{m_2} $.
		Since neither $ G_n $ nor $ H_m $ has multiple values we have $ n_1 \neq n_2 $ and $ m_1 \neq m_2 $.
		We write $ N = \max \setb{n_1,n_2} $ and $ M = \max \setb{m_1,m_2} $, and consider once again equation \eqref{p8-eq:pr2starteq}.
		
		We differ between four cases. The case $ d = t = 1 $ is covered by Theorem \ref{p8-thm:diffequaldiffdom}.
		Now we consider the case $ d = 1 $ and $ t > 1 $.
		In a minimal vanishing subsum containing $ b_1 \beta_1^M $ for degree reasons there must be also contained either another summand $ b_{j_1} \beta_{j_1}^M $ for $ \beta_{j_1} \in R_H $ or $ a_1 \alpha_1^{k_1} $ for $ k_1 \in \set{n_1,n_2} $.
		
		In the first subcase we divide this minimal vanishing subsum by $ b_1 \beta_1^M $ and Theorem \ref{p8-thm:brownawellmasser} gives an effectively computable constant $ C_1 $ such that
		\begin{equation*}
			\Hc \left( \frac{b_{j_1} \beta_{j_1}^M}{b_1 \beta_1^M} \right) \leq C_1.
		\end{equation*}
		Hence we have
		\begin{align*}
			M \cdot \Hc \left( \frac{\beta_{j_1}}{\beta_1} \right) &= \Hc \left( \left( \frac{\beta_{j_1}}{\beta_1} \right)^M \right) = \Hc \left( \frac{b_{j_1}}{b_1} \left( \frac{\beta_{j_1}}{\beta_1} \right)^M \cdot \frac{b_1}{b_{j_1}} \right) \\
			&\leq \Hc \left( \frac{b_{j_1}}{b_1} \left( \frac{\beta_{j_1}}{\beta_1} \right)^M \right) + \Hc \left(\frac{b_1}{b_{j_1}} \right) \leq C_2
		\end{align*}
		and thus $ M \leq C_3 $.
		
		In the second subcase we divide this minimal vanishing subsum also by $ b_1 \beta_1^M $ and Theorem \ref{p8-thm:brownawellmasser} gives an effectively computable constant $ C_4 $ such that
		\begin{equation*}
			\Hc \left( \frac{a_1 \alpha_1^{k_1}}{b_1 \beta_1^M} \right) \leq C_4.
		\end{equation*}
		Since $ \alpha_1 $ and $ \beta_1 $ are multiplicatively independent by assumption, we get under use of Lemma \ref{p8-lemma:quotofindep} the bound $ M \leq C_5 $.
		
		Thus we have a bound $ \max \setb{m_1, m_2} = M \leq C_6 $.
		The summand $ a_1 \alpha_1^{n_1} $ must be contained in a minimal vanishing subsum with at least one element $ \omega $ of the form $ b_j \beta_j^{m_1} $ or $ b_j \beta_j^{m_2} $ because $ a_1 \alpha_1^{n_1} - a_1 \alpha_1^{n_2} \neq 0 $.
		Since the exponential variable occurring in $ \omega $ is among $ m_1, m_2 $, the height $ \Hc(\omega) $ can be bounded by an effectively computable constant.
		Therefore we have
		\begin{align*}
			n_1 \cdot \Hc(\alpha_1) &= \Hc(\alpha_1^{n_1}) \leq \Hc(a_1 \alpha_1^{n_1}) + \Hc(a_1) \\
			&\leq \Hc \left( \frac{a_1 \alpha_1^{n_1}}{\omega} \right) + \Hc(\omega) + \Hc(a_1) \leq C_7
		\end{align*}
		because the height of the quotient in the last line is bounded by Theorem \ref{p8-thm:brownawellmasser}.
		For $ n_2 $ we get an analogous bound.
		Altogether we have
		\begin{equation*}
			\max \setb{n_1,n_2,m_1,m_2} \leq C_8.
		\end{equation*}
		The third case that $ d > 1 $ and $ t = 1 $ is handled analogously.
		
		Finally, assume that $ d > 1 $ and $ t > 1 $.
		Without loss of generality we may assume that $ \deg (a_1 \alpha_1^N) \geq \deg (b_1 \beta_1^M) $.
		Consider a minimal vanishing subsum of equation \eqref{p8-eq:pr2starteq} containing $ a_1 \alpha_1^N $.
		For degree reasons in this subsum must be also contained either a summand of the form $ b_{j_2} \beta_{j_2}^M $ for $ \beta_{j_2} \in R_H $ or another summand of the form $ a_{i_2} \alpha_{i_2}^N $ for $ \alpha_{i_2} \in R_G $.
		
		In the first case we divide this minimal vanishing subsum by $ a_1 \alpha_1^N $ and Theorem \ref{p8-thm:brownawellmasser} yields
		\begin{equation*}
			\Hc \left( \frac{b_{j_2} \beta_{j_2}^M}{a_1 \alpha_1^N} \right) \leq C_9.
		\end{equation*}
		As $ \alpha_1 $ and $ \beta_{j_2} $ are multiplicatively independent by assumption, we get under use of Lemma \ref{p8-lemma:quotofindep} the bound
		\begin{equation*}
			\max \setb{n_1,n_2,m_1,m_2} = \max \setb{N,M} \leq C_{10}.
		\end{equation*}
		
		In the second case we divide this minimal vanishing subsum as well by $ a_1 \alpha_1^N $ and Theorem \ref{p8-thm:brownawellmasser} yields
		\begin{equation*}
			\Hc \left( \frac{a_{i_2} \alpha_{i_2}^N}{a_1 \alpha_1^N} \right) \leq C_{11}.
		\end{equation*}
		As we have seen above this gives a bound $ N \leq C_{12} $.
		Then we consider a minimal vanishing subsum of \eqref{p8-eq:pr2starteq} containing $ b_1 \beta_1^M $.
		If it contains another summand of the form $ b_{j_3} \beta_{j_3}^M $ for $ \beta_{j_3} \in R_H $, then we get an upper bound $ M \leq C_{13} $ in the same manner as for $ N $.
		Otherwise this minimal vanishing subsum must contain a summand $ a_{i_3} \alpha_{i_3}^{k_2} $ for $ k_2 \in \set{n_1, n_2} $ and $ i_3 \in \set{1,\ldots,d} $.
		Here we get a bound
		\begin{equation*}
			\Hc \left( \frac{b_1 \beta_1^M}{a_{i_3} \alpha_{i_3}^{k_2}} \right) \leq C_{14}
		\end{equation*}
		by Theorem \ref{p8-thm:brownawellmasser}.
		We have already seen that such an inequality ends up in a bound $ M \leq C_{15} $ since $ k_2 $ is bounded.
		Thus we have
		\begin{equation*}
			\max \setb{n_1,n_2,m_1,m_2} = \max \setb{N,M} \leq C_{16}
		\end{equation*}
		and the theorem is proven.
	\end{proof}

\end{document}